\journal{Topology and its Applications}
\newtheorem{theorem}{Theorem}[section]
\newtheorem{proposition}[theorem]{Proposition}
\newtheorem{lemma}[theorem]{Lemma}
\newtheorem{corollary}[theorem]{Corollary}
\theoremstyle{definition}
\newtheorem{definition}[theorem]{Definition}
\newtheorem{example}[theorem]{Example}
\newcommand{\ua}{\mathord{ \uparrow}}
\newcommand{\da}{\mathord{\downarrow}}
\newcommand\twoheaduparrow{\mathord{\rotatebox[origin=c]{90}{$\twoheadrightarrow$}}}
\newcommand\twoheaddownarrow{\mathord{\rotatebox[origin=c]{90}{$\twoheadleftarrow$}}}
\newcommand{\dda}{\twoheaddownarrow}
\newcommand{\dua}{\twoheaduparrow}
\newcommand{\mjx}{\mathcal J(x)}
\newcommand{\mj}{\mathcal J}
\newcommand{\mi}{\mathcal I}
\newcommand{\mn}{\mathbb N}
\newcommand{\Max}{\text{Max}}
\begin{document}

\begin{frontmatter}

\title{The answers to two problems on maximal point spaces of domains}
\tnotetext[mytitlenote]{This work was supported by the National Natural Science Foundation of China (No.12101313, 62476030).} 

%
%

\author[1]{Xiaoyong Xi}
\author[2,22]{Chong Shen\corref{cor2}} 
\ead{shenchong0520@163.com}
\cortext[cor2]{Corresponding author.}
\author[3]{Dongsheng Zhao}

\address[1]{School of Mathematics and Statistics,	Yancheng Teachers University, Yancheng 224002, Jiangsu,  China}
\address[2]{School of Science,	Beijing University of Posts and Telecommunications,  Beijing 100876,	China}
\address[22] {Key Laboratory of Mathematics and Information Networks (Beijing University of Posts and Telecommunications), Ministry of Education, China.}
\address[3]{Mathematics and Mathematics Education, National Institute of Education,
	Nanyang Technological University,  1 Nanyang Walk, 637616, Singapore}

\begin{abstract}
	A topological space  is domain-representable (or, has a domain model)  if it is homeomorphic to the maximal point space $\mbox{Max}(P)$ of a domain $P$ (with the relative Scott topology). 
We first construct an example to show that the set of  maximal points of an ideal domain $P$ need not be a $G_{\delta}$-set in the Scott space $\Sigma P$, thereby answering  an open problem from Martin (2003). In addition, 
 Bennett and Lutzer (2009) asked whether $X$ and $Y$ are domain-representable if their product space $X \times Y$ is domain-representable. This problem was first solved by \"{O}nal and Vural (2015).
In this paper, we  provide a new approach to Bennett and Lutzer's problem.
 \end{abstract}

\begin{keyword}
continuous dcpo, domain, domain-representable space, maximal point space, Scott topology

\MSC[2010] 06B35 \sep 06B30 \sep 54A05
\end{keyword}

\end{frontmatter}


\section{Introduction}
The Scott topology of posets, introduced by Dana Scott \cite{Scott1,Scott2}, is one of the core structures in domain theory.
Spaces that are homeomorphic to the space of maximal elements of a domain (i.e., a continuous dcpo) with the relative Scott topology are referred to as domain-representable by Bennett and Lutzer
in \cite{1}, whereas Martin, in several of his papers (e.g., \cite{4}), 
refers to such spaces as having a domain model.
The concept of representing a topological space as the set of maximal elements of a specific type of poset originates from theoretical computer science \cite{12}, which also proves to be highly valuable in the study of completeness properties of Baire spaces. 
Identifying domain-representable spaces has become one of the active areas in domain theory. Initially, these spaces were named by various terms such as ``$P$-complete" or ``FY-complete" by different authors, but the term "directed complete" was eventually introduced in \cite{2}.  Fleissner and Yengulalp provided a simplified definition of domain-representability in \cite{3}, proving its equivalence to the traditional definition. 

One  classical  example is the poset $\mathrm{IR}(\mathbb{R})$ of all closed intervals of reals, with the inverse set inclusion order, is a domain model of the Euclidean space $\mathbb{R}$.  
 Lawson confirmed that a space  is Polish if and only if it has an $\omega$-domain model satisfying the Lawson condition \cite{ref1}. 
 K. Martin \cite{ref3} proved  that the space $\mbox{Max}(P)$ of an $\omega$-continuous dcpo $P$ is regular if and only if it is Polish.
 The formal balls of a metric space, initially introduced by Weihrauch and Schreiber as a domain-theoretic representation of metric spaces \cite{Weihrauch}, were further employed  by Edalat and Heckmann to prove that every complete metric space is domain-representable \cite{edalat-1998}. The Sorgenfrey line, which is not metrizable, is also domain-representable \cite{Bennett-Lutzer-2009}. 
 A more general result is that every  $T_1$ space is homeomphic to the maximal points of some dcpo  with the relative Scott topology \cite{zhao-xi-2018}.

 As pointed out by Martin \cite{ref2}, knowing that the set of maximal points  is a $G_{\delta}$-subset is often useful in proofs. For instance, Edalat studied the connection between measure theory and the probabilistic powerdomain \cite{edalat-1998} assuming a separable metric space that embedded as a $G_{\delta}$-subset of a countably based domain.
  
The following are some results on $G_{\delta}$-subsets $E \subseteq \mbox{Max}(P)$ obtained by  Martin:
 \begin{itemize}
 	\item [(i)] If $P$ is an $\omega$-domain and ${{\rm Max}(P)}$ is regular, then ${{\rm Max}(P)}$ is a $G_{\delta}$-set \cite{ref3}.
 	\item [(ii)] For any $\omega$-ideal domain $P$ (see Section 1 for the definition of $\omega$-ideal domain), ${{\rm Max}(P)}$ is a $G_{\delta}$-set \cite{ref2}.
 \end{itemize}
Furthermore, Brecht, Goubault-Larrecq, Jia, and Lyu  \cite{Brecht} showed that continuous valuations on $G_\delta$-subsets of locally compact sober spaces can be extended to measures. They also demonstrated that each $G_\delta$-subset of the maximal point space of an $\omega$-ideal domain is a quasi-Polish space. 

This paper focuses on two problems related to maximal point spaces of domains. The first problem was posed  by Martin (\cite[Section 8, Ideas (iv)]{ref2}):
\begin{itemize}
	\item Is there an ideal domain whose maximal points do not form a $G_\delta$-set? 
\end{itemize}
In Section 3, we construct an ideal domain $P$ such that $\Max (P)$ is not  $G_{\delta}$-set, thereby providing a negatve answer.

The second  problem was posed by Bennett and Lutzer in \cite{Bennett-Lutzer-2009}:
\begin{itemize}
	\item Suppose the product space $X \times Y$ is domain-representable, where $Y \neq \emptyset$. Must  $X$ be  domain-representable?
\end{itemize}
In \cite{5}, \"{O}nal and Vural proved that domain-representability is hereditary with respect to retracts, meaning that retracts of a domain-representable space are also domain-representable. Consequently, if the product of two spaces is domain-representable, the factor spaces must also be domain-representable; thus gives a positive answer to Bennett and Lutzer's problem.
In Section 4, we provide a new proof for the above problem, which is distinct from that of \cite{5} in that it relies directly on the original definition of domain-representable spaces proposed by Martin in \cite{4}.
Our method may help one solve  the corresponding product problem of Scott-domain-representable spaces.

 \section{Preliminaries}

 We first review  some basic concepts and notations that will be used later. For more details, we refer the reader to \cite{redbook,goubault}.

 Let $P$ be a poset.
 For a subset $A$ of  $P$, we shall adopt the following standard notations:
 $$\ua A=\{y\in P: \exists x\in A, x\leq y\},\ \da A=\{y\in P:\exists x\in A, y\leq x\}.$$
 For each $x\in X$, we simply write $\ua x$ and $\da x$ for $\ua\{x\}$ and  $\da \{x\}$, respectively. A subset $A$ of $P$ is called a \emph{lower} (\emph{upper}, resp.) \emph{set} if $A=\da A$ ($A=\ua A$, resp.).
 An element $x$ is \emph{maximal}  in $P$, if  $P\cap \ua x=\{x\}$. The set of all maximal elements of $P$ is denoted by $\Max (P)$.

 A nonempty subset $D$ of $P$ is \emph{directed}  if every two
 elements in $D$ have an upper  bound in $D$.  For  $x,y\in P$,  $x$ is \emph{way-below} $y$, denoted by $x\ll  y$, if for each directed subset
 $D$ of $P$ with  $\bigvee D$ existing,
 $y\leq\bigvee D$
 implies $x\leq d$ for some $d\in D$.  Denote $\dua x=\{y\in P:x\ll y\}$ and $\dda x=\{y\in P:y\ll x\}$. A poset $P$ is
 \emph{continuous}, if for each $x\in P$, the set $\dda x$ is directed and
 $x= \bigvee\dda x$.

 An element $a\in P$ is called \emph{compact}, if $a\ll a$. The set of all compact elements of $P$ will be denoted by   $K(P)$.
 Then, $P$ is called \emph{algebraic} if for each $x\in P$, the set $\{a\in K(P):a\leq x\}$ is directed and
 $x=\bigvee\{a\in K(P):a\leq x\}$.
 A continuous (algebraic, resp.) dcpo  is also called a \emph{domain} (an \emph{algebraic domain}, resp.).
 A subset $B\subseteq P$ is a {\em base} of  $P$ if  for each $x\in P$, $B\cap \dda x$ is directed and $\bigvee(B\cap \dda x)$=$x$.
 If $P$ has a countable base, then $P$ is called an {\em $\omega$-continuous dcpo} or \emph{$\omega$-domain}.

 A subset $U$ of  $P$ is \emph{Scott open} if
 (i) $U=\mathord{\uparrow}U$ and (ii) for each directed subset $D$ of $P$ for
 which $\bigvee D$ exists, $\bigvee D\in U$ implies $D\cap
 U\neq\emptyset$. All Scott open subsets of $P$ form a topology on $P$,
 called the \emph{Scott topology} and denoted by $\sigma(P)$. The space $\Sigma P=(P,\sigma(P))$ is called the
 \emph{Scott space} of $P$.

 Let $X$ be a topological space. A  subset $G \subseteq X$ is called  a \emph{$G_{\delta}$-set}, if there exists a countable family $\{U_n:n\in \mathbb N\}$ of open sets such that  $G=\bigcap_{n\in\mathbb{N}} U_{n}$.

 \begin{definition}[\cite{ref2}]
 	A domain is called \emph{ideal} if every element is either compact or maximal.
 \end{definition}

A poset model of a space $X$ is a poset $P$ such that $X$ is homeomorphic to $\Max (P)$ with the relative Scott topology on $P$.
 If $P$ is a poset model of $X$ such that $P$ is an (ideal) domain, then we say that $X$ has an (ideal) domain model, or equivalently, $X$ is (ideal)
 domain-representable.

\section{The set of maximal points  of	an ideal domain need not be a $G_{\delta}$-set}

In \cite[Section 8, Ideas (iv)]{ref2}, Martin asks: \begin{itemize}
	\item Is there an ideal domain whose  maximal elements do not form a  $G_\delta$-set?
\end{itemize}
The following example shows that the maximal point set of an ideal domain need not be a $G_{\delta}$-set.

\begin{example}\label{exm2}
	Let $\mn$ be the set of all natural numbers equipped with the usual order.
Let $E=\mn\times(\mn\cup \{\infty\})$, and $F=\prod_{i\in\mn}C_i$ be the Cartesian product of all $C_j$, where
	$C_i= \{i\}\times\mn$ for each $i\in\mn$.
	For each $\varphi\in F$, $\varphi(i)\in C_i$ denotes the $i$-th coordinate of $\varphi$. It is clear that $F$ is uncountable and $\bigcup_{i\in\mn}C_i\subseteq E$.
	
	Let $L=E\cup (F\times \{0,1\})$ and define a binary relation $\leq$ on $L$ as follows:
	\begin{itemize}
		\item [(i)] $\forall i\in\mn$, $(i,m)\leq (i,n)\leq (i,\infty)$ whenever $m\leq n$ in $\mn$;
		\item [(ii)] $\forall \varphi\in F$, $\forall i\in \mn$, $\varphi(i)\leq (\varphi,0)\leq(\varphi,1)$.
	\end{itemize}
	Then, one can easily verify that $(L,\leq)$ is an ideal domain, as shown in Figure \ref{fig3}.
	
	\begin{figure}[ht] 
		\centering 
		\includegraphics[width=0.7\textwidth]{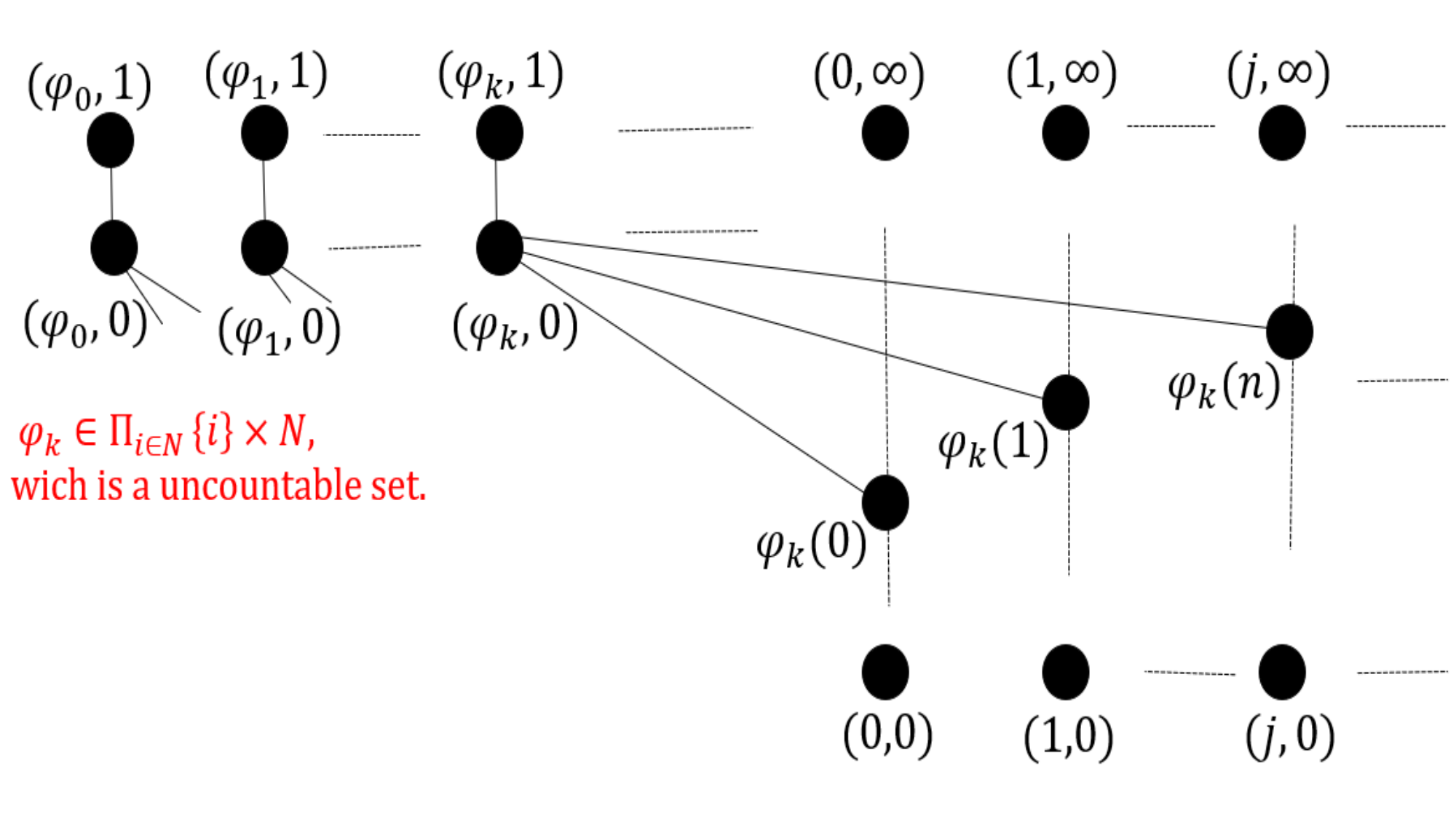} 
		\caption{The ideal domain $L$} 
		\label{fig3} 
	\end{figure}

	{\bf Claim: } $\Max(L)$ is not a $G_\delta$-set.
	
	Suppose that $\Max(L)$ is a $G_\delta$-set. Then there is a countable family  $\{U_i: i\in\mathbb{N}\}$ of Scott open subsets of $L$ such that  $\Max(L)=\bigcap_{i\in\mn} U_i$.
	For each $j\in\mn$, since
	$$\bigvee\{(j,n):n\in\mn\}=(j,\infty)\in\bigcap_{i=0}^{\infty} U_i\subseteq U_j,$$
	there exists $n_j\in\mathbb N$ such that  $(j,n_j)\in U_j$ (clearly $(j,n_j)\in C_j$).
	Define $\varphi\in F$ by $\varphi(j)=(j,n_j)\in  U_j$ for each $j\in\mathbb{N}$. Then, since $(\varphi,0)\geq\varphi(j)\in U_j$ for any $j\in \mn$, we have that
	$(\varphi,0)\in\bigcap_{j\in\mn}U_j=\Max(L)$, but $(\varphi,0)\notin \Max(L)$, which is a contradiction.  Therefore, 	$\Max(L)$ is not a $G_\delta$-set.
\end{example}

It is worth noting that though the above maximal point space $\Max(L)$ is not a $G_{\delta}$-set in $\Sigma L$, but it  is  homeomorphic to a $G_{\delta}$-subspace of another ideal domain $\widehat{L}$, where $\widehat{L}$ is a slight modification of $L$. This is demonstrated in the following example.

\begin{example}\label{exam3}
	Let \(\widehat{L} = L \setminus \{(\varphi, 1): \varphi \in F\}\) with the inherited order from \(L\), where \(F\) and \(L\) are sets defined in Example \ref{exm2}.
	Then, $\widehat{L}$ is an ideal domain, as can be seen in Figure \ref{fig4}. 
	\begin{figure}[ht] 
		\centering 
		\includegraphics[width=0.7\textwidth]{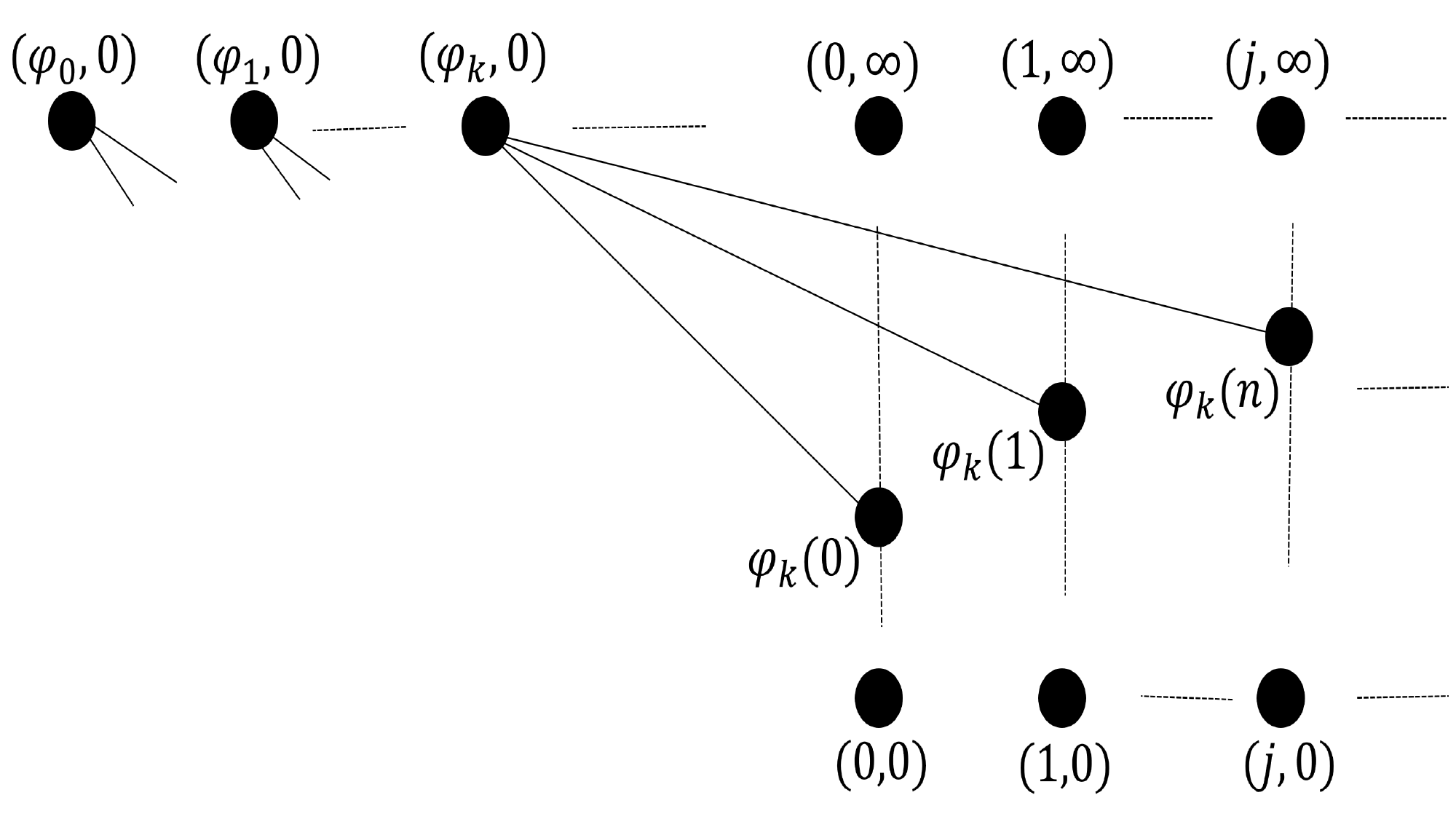}
		\caption{The ideal domain $\widehat{L}$}
		\label{fig4}
	\end{figure}

	{\bf Claim:} $\Max(\widehat{L})$ is a $G_{\delta}$-set.
	
	For each $k\in\mn$, define
	$$U_k:=\widehat{L}\setminus\bigcup\{\da(i,k):i\leq k\}.$$
	Then, it is clear that $\{U_k:k\in\mn\}$ is a $G_{\delta}$-set and such that 
	$\bigcap_{k\in\mn}U_k=\Max(\widehat{L})$. Therefore, $\Max(\widehat{L})$ is a  $G_{\delta}$-set.
\end{example}

\section{The productivity of domain-representable spaces}

In this section, we provide a direct construction of domain models for the factor spaces $X$ and $Y$, assuming that the product space $X \times Y$ is domain-representable. This offers another proof to the problem posed by Bennett and Lutzer in \cite{Bennett-Lutzer-2009}, as mentioned in the Introduction.  
It should be noted that this construction is not straightforward and cannot be derived directly, as the following example demonstrates:
\begin{example}\label{exam1}
	Let $P=(\mathbb N\times \{0,1\})\cup \mathbb N\cup\{\infty\}$ equpped with the partial order given by
	\begin{itemize}
		\item [(i)]  $\infty\leq (0,1)$;
		\item [(ii)] $\forall n\in\mn$, $n\leq n+1\leq \infty$.
	\end{itemize}
	It is clear that $P$ is a domain model of $\mathbb N\times \{0,1\}$ with the discrete topology. 
Now, if we want to construct a domain model for $\mathbb{N} \times \{0\}$, which is homeomorphic to the factor space $\mathbb{N}$ (which is also the discrete space), a natural candidate would be  $Q = \da (\mathbb{N} \times \{0\})$ with the inherited order, which indeed is its poset model in this example. However, this is not  a dcpo. Therefore, $Q$ is not a domain model of $\mathbb N$ with the discrete order.
	\begin{figure} 
	\centering 
	\includegraphics[width=0.81\textwidth]{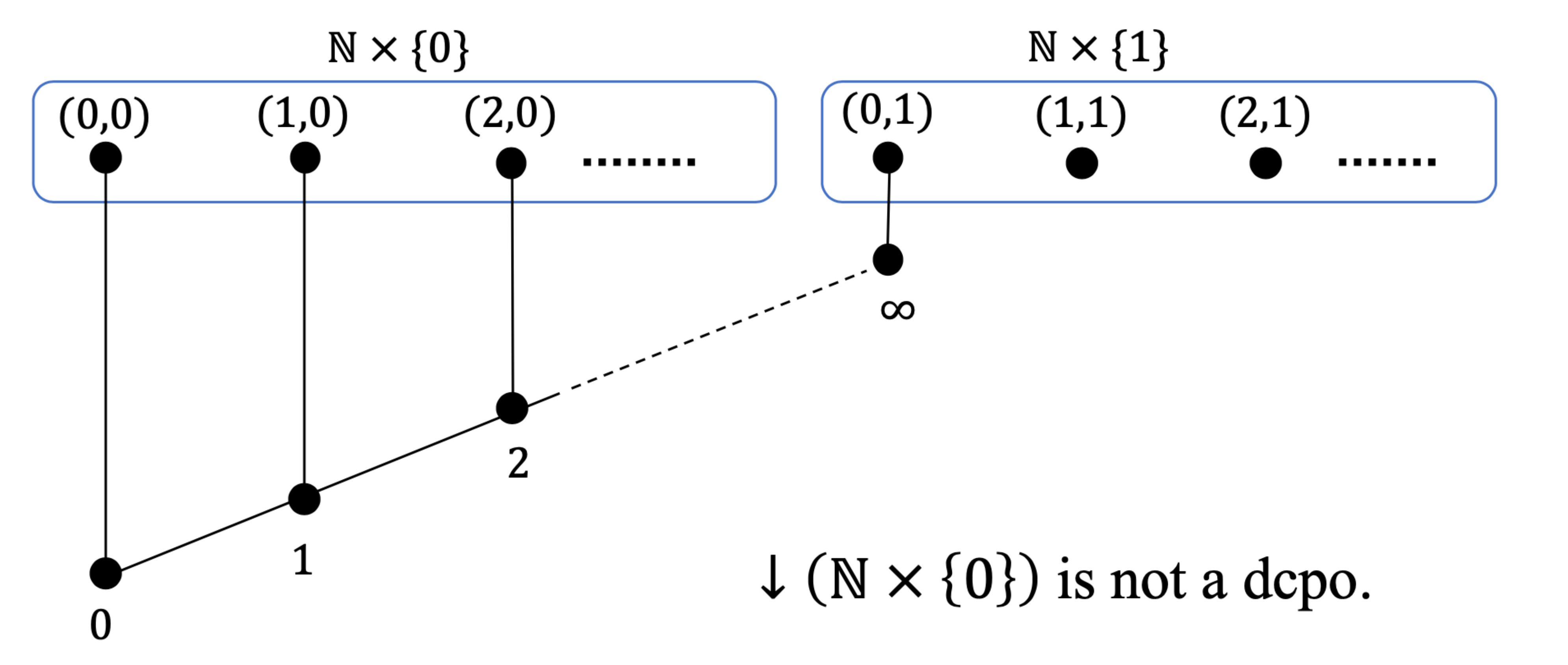} 
	\caption{The model of $\mathbb N\times \{0,1\}$ with the discrete topology} 
	\label{fig5} 
\end{figure}	
\end{example}

In Example \ref{exam1}, one can easily observe that $\da (\mathbb N\times \{1\})$ is a domain model of $\mathbb N$ with the discrete topology.
As a supplementary conclusion to the example, we add the following propositions.
\begin{proposition}
Let $P$ be a (Scott-) domain model of the product $X\times Y$ of two spaces $X$ and $Y$. If there exists $y\in Y$ such that 
$\da (X\times \{y\})$ is a Scott closed subset (or equivalently, a subdcpo) of $P$, then, with the inherited order, it is a (Scott-) domain model of $X$. 
\end{proposition}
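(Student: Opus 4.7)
Identify $\Max(P)$ with $X\times Y$ via the given homeomorphism $h$, write $M:=h^{-1}(X\times\{y\})\subseteq \Max(P)$, and let $Q:=\da M=\da(X\times\{y\})$ with the order inherited from $P$. Because $Q$ is a lower set, being Scott-closed in $P$ is equivalent to being a sub-dcpo of $P$; either way directed suprema in $Q$ coincide with those in $P$, so $Q$ is a dcpo.

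To show $Q$ is continuous, I would exploit the interplay between $\ll_P$ and $\ll_Q$. Since $Q$ is a lower set, $\{p\in P : p\ll_P q\}\subseteq Q$ for each $q\in Q$. Moreover, if $a,q\in Q$ and $a\ll_P q$, then any directed $D\subseteq Q$ with $\bigvee^Q D\geq q$ also satisfies $\bigvee^P D\geq q$ (the two joins agree), so $a\leq d$ for some $d\in D$; hence $a\ll_Q q$. Thus $\{p\in P:p\ll_P q\}\subseteq\{r\in Q:r\ll_Q q\}$, and since the former is directed with supremum $q$ by continuity of $P$, so is the latter; therefore $Q$ is a domain.

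The identification $\Max(Q)=M$ is clear: each $m\in M$ is maximal in $P$ hence in $Q$, and conversely every $q\in Q$ lies below some $m\in M$ by definition of $Q$, forcing $q=m$ whenever $q$ is maximal. It remains to show that the relative Scott topology on $M$ inherited from $\Sigma Q$ coincides with that inherited from $\Sigma P$; the latter is, via $h$, the subspace topology on $X\times\{y\}\subseteq X\times Y$, which is homeomorphic to $X$. The inclusion ``from $P$'' $\subseteq$ ``from $Q$'' is automatic, since the restriction to $Q$ of any Scott open set of $P$ is Scott open in $Q$. For the converse, since both $P$ and $Q$ are continuous, it suffices to prove the identity
$$\{m\in M : q\ll_P m\}=\{m\in M : q\ll_Q m\}\qquad(q\in Q),$$
because Scott opens are generated by the sets of the form $\{r: q\ll r\}$. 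The inclusion $\subseteq$ has just been noted. For $\supseteq$, assume $q\ll_Q m$ with $m\in M$, and let $D\subseteq P$ be directed with $\bigvee^P D\geq m$. Maximality of $m$ forces $\bigvee^P D=m\in Q$, and the lower-set property of $Q$ yields $D\subseteq\da m\subseteq Q$; applying $q\ll_Q m$ produces $d\in D$ with $q\leq d$, so $q\ll_P m$.

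For the Scott-domain case only one extra observation is needed: bounded completeness transfers, for if $A\subseteq Q$ has an upper bound $u\in Q$ then $\bigvee^P A$ exists in $P$, satisfies $\bigvee^P A\leq u$, and therefore lies in $Q$ by the lower-set property. I expect the main technical obstacle to be the fact that $\ll_P$ and $\ll_Q$ do not in general coincide on $Q$; the proof finesses this by working only at the maximal points $M$, where maximality forces every relevant directed set to lie inside the lower set $Q$, restoring the equivalence of the two way-below relations exactly where it is needed.
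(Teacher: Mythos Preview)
Your argument is correct and follows the same three-part outline as the paper's proof (continuity of $Q$, agreement of the two topologies on $\Max(Q)$, and transfer of bounded completeness), but where the paper simply cites standard facts from \cite{redbook}---in particular Exercise~II-1.26 (the Scott topology on a Scott-closed subset of a domain coincides with the relative Scott topology) and Corollary~I-2.5 (Scott-closed subsets of domains are domains)---you supply direct, self-contained proofs. The one genuinely different technical idea is your treatment of the topology: rather than invoking the global equality $\sigma(Q)=\sigma(P)|_Q$, you show only that $\ll_P$ and $\ll_Q$ agree \emph{at maximal points}, exploiting that any directed set in $P$ with supremum $\geq m\in M$ must have supremum exactly $m$ and hence lie inside $\da m\subseteq Q$. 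This is weaker than the cited exercise but exactly what is needed, and it has the virtue of making transparent why the hypothesis that $Q$ is a \emph{lower} set (not merely Scott-closed) is doing the work. One small remark: your phrase ``so is the latter'' when deducing that $\dda_Q q$ is directed from the fact that it contains the directed set $\dda_P q$ with supremum $q$ is correct but terse; it relies on the standard observation that if some directed $D\subseteq\dda q$ has $\bigvee D=q$, then $\dda q$ is automatically directed.
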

\begin{proof}
The proof is clear by using the following facts:
\begin{itemize}
	\item [(i)] The Scott topology on $\da (X\times \{y\})$ agrees with the relative Scott topology from $P$ (see Exercise II-1.26 on page 151 in \cite{redbook}. \cite{redbook}).
	\item [(ii)] Every Scott subset of a domain is a closure system, hence is a domain (see Corollary I-2.5 in \cite{redbook}).
	\item [(iii)] It is easy to check that every lower subset of a bounded complete dcpo is also bounded complete.
\end{itemize}
\end{proof}

\begin{lemma}\label{lem}
	Let $P$ be an ideal domain. If  $X\subseteq \Max(P)$ is closed  in the maximal point space, then $\da X$  is a Scott closed subset of $P$, and hence is  an ideal domain.
\end{lemma}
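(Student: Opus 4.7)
The plan is to verify the two defining properties of Scott-closedness for $\da X$—namely that it is a lower set and that it is closed under existing directed suprema—and then invoke the structural fact (already cited in the previous proposition) that Scott-closed subsets of a domain are themselves domains. The lower-set property is immediate from the definition of $\da X$, so the entire content of the lemma lies in the directed-sup condition together with identifying the right type of elements of $\da X$.

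For the directed-sup condition, I would take a directed $D \subseteq \da X$ whose supremum $s := \bigvee D$ exists in $P$, and split according to the ideal-domain hypothesis: every element of $P$ is either compact or maximal. If $s$ is compact, then $s \ll s$ combined with $s = \bigvee D$ forces $s \leq d$ for some $d \in D$, whence $s = d \in \da X$. This case is essentially automatic and uses the ideal-domain assumption in a clean way.

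The substantive case is when $s$ is maximal, where I must show $s \in X$. Assume for contradiction that $s \notin X$. Using the definition of the relative Scott topology on $\Max(P)$, the closedness of $X$ gives a Scott open $U \subseteq P$ with $s \in U$ and $U \cap X = \emptyset$. Scott-openness of $U$ together with $s = \bigvee D$ produces some $d \in D \cap U$; since $d \in \da X$, there is $x \in X$ with $d \leq x$, and the upper-set property of $U$ forces $x \in U$, contradicting $U \cap X = \emptyset$. Thus $s \in X \subseteq \da X$, and $\da X$ is Scott closed.

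Finally, for the "hence is an ideal domain" conclusion, I would appeal to the fact (from Corollary I-2.5 of \cite{redbook}, as in the preceding proposition) that a Scott-closed subset of a domain is itself a domain, and observe that every element of $\da X$ inherits its status from $P$: a compact element of $P$ lying in $\da X$ remains compact in $\da X$ because directed suprema in the Scott-closed subset agree with those in $P$, and a maximal element of $P$ lying in $\da X$ is trivially maximal in $\da X$. I expect the main obstacle to be Case~2, which is where both hypotheses—the ideal-domain structure (to reduce to a maximal supremum) and the closedness of $X$ in the relative Scott topology (to produce the separating Scott open $U$)—are genuinely needed.
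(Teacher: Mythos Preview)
Your proposal is correct and follows essentially the same approach as the paper: both split on whether $\bigvee D$ is compact or maximal (using the ideal-domain hypothesis), handle the compact case identically, and for the maximal case use the relative-topology characterization of closedness of $X$---the paper phrases this via a Scott-closed $C$ with $X=C\cap\Max(P)$ and shows $D\subseteq C$, while you phrase the same idea contrapositively via a separating Scott-open $U$. Your treatment of the ``hence an ideal domain'' clause is more detailed than the paper's, which simply declares it trivial.
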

\begin{proof}
	We first show that $\da X$ is a Scott closed subset of $P$.
	Suppose $D$ is a directed subset of $\da X$. Then, since $P$ is an ideal domain, there are two cases:
	\begin{itemize}
		\item [(c1)] $\bigvee D$ is compact. This means $\bigvee D\in D\subseteq \da X$.
		\item [(c2)] $\bigvee D$ is a maximal point of $P$. Since $X$ is closed in $\Max (P)$, there exists a Scott closed subset $C$ of $P$ such that $X=C\cap \Max (P)$, which means $\da X = \da (C\cap \Max(P))$. Then,  $D\subseteq \da X = \da (C\cap \Max(P))\subseteq \da C=C$, which follows that $\bigvee D\in  C\cap \Max(P)= X$.
	\end{itemize}
Thus, $\da X$ is Scott closed. It is trivial to check that every Scott closed subset of an ideal domain is an ideal domain.
\end{proof}

From Lemma \ref{lem}, one can deduce the following corollary.
\begin{corollary}
Let $P$ be an ideal domain model of the product $X\times Y$ of two spaces $X$ and $Y$. Then, for each $y\in Y$,  $\da (X\times\{y\})$ is a domain model of $X$. 
\end{corollary}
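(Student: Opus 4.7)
The plan is to reduce this corollary directly to Lemma \ref{lem} by identifying $X\times\{y\}$ with a closed subset of $\Max(P)$. Fix a homeomorphism $h\colon X\times Y\to\Max(P)$ witnessing that $P$ is an ideal domain model of $X\times Y$, and set $\widetilde{X}:=h(X\times\{y\})$.

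The first step is to check that $X\times\{y\}$ is closed in $X\times Y$, so that $\widetilde{X}$ is closed in $\Max(P)$. Since any two distinct maximal elements of a domain are incomparable, the complement of $\da m$ in $P$ is Scott open and separates $\Max(P)\setminus\{m\}$ from $m$, so $\Max(P)$ is $T_1$ in the relative Scott topology. Hence $X\times Y$ is $T_1$, and a routine argument using the product topology shows that each factor, in particular $Y$, is $T_1$; thus $\{y\}$ is closed in $Y$ and $X\times\{y\}$ is closed in $X\times Y$.

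The second step is a direct application of Lemma \ref{lem} to $\widetilde{X}\subseteq\Max(P)$: this gives that $\da\widetilde{X}$ is a Scott closed subset of $P$ and is itself an ideal domain. After identifying $X\times Y$ with $\Max(P)$ via $h$, the set $\da\widetilde{X}$ is precisely what the statement writes as $\da(X\times\{y\})$.

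Finally, I would verify that $\da\widetilde{X}$ is a domain model of $X$. For the underlying set, $\Max(\da\widetilde{X})=\widetilde{X}$: every element of $\widetilde{X}$ is already maximal in $P$ and hence in $\da\widetilde{X}$, while any $p\in\da\widetilde{X}$ maximal there satisfies $p\leq q$ for some $q\in\widetilde{X}\subseteq\da\widetilde{X}$, forcing $p=q$. For the topology, since $\da\widetilde{X}$ is Scott closed in $P$, the Scott topology on $\da\widetilde{X}$ coincides with the subspace topology inherited from $\Sigma P$, so the relative Scott topology on $\Max(\da\widetilde{X})=\widetilde{X}$ is the restriction of the topology of $\Max(P)$; restricting $h$ then gives the required homeomorphism $X\cong X\times\{y\}\cong\widetilde{X}=\Max(\da\widetilde{X})$. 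I do not foresee a substantial obstacle beyond the preliminary $T_1$-check that justifies the closedness of $X\times\{y\}$.
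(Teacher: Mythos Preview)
Your proposal is correct and follows essentially the same route as the paper, which simply states that the corollary is deduced from Lemma~\ref{lem}; you have merely supplied the details the paper leaves implicit (the $T_1$-check for closedness of $X\times\{y\}$, the identification $\Max(\da\widetilde{X})=\widetilde{X}$, and the agreement of the Scott topology on the Scott closed subset with the relative topology, the last of which the paper records earlier as fact~(i) in Proposition~4.2 citing Exercise~II-1.26 of \cite{redbook}).
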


Note that the above corollary does not hold for Scott domains.
For example, consider the $P$ in  Example \ref{exam1}, which serves as a Scott domain model for $\mn\times \{0,1\}$ with the discrete topology,  but $\da(\mathbb N\times \{0\})$ is not a Scott domain model of $\mn$ with the discrete topology.

Next, we provide a new construction method for the domain model of factor spaces. Before doing so, we state a lemma, which is well-known, will be used in our proof.

\begin{lemma}[\cite{Bennett-Lutzer-2009}]\label{lem1}
If  $X$ has a  domain model, then it has an algebraic domain model.
\end{lemma}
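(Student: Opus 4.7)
The plan is, given a domain model $P$ of $X$, to construct an algebraic dcpo $Q$ whose maximal point space (with the relative Scott topology) is still homeomorphic to $X$. The intuition is that algebraic dcpos admit more flexibility than general continuous ones, so one expects to be able to enlarge or rebuild $P$ by inserting enough compact elements to make it algebraic, without disturbing the topology on $\Max(P)$.

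A natural first candidate is the ideal completion $Q_0=\mathrm{Idl}(P)$, which is always an algebraic dcpo with compact elements exactly the principal ideals $\da x$, $x\in P$. Since $P$ is a dcpo, every ideal has a supremum in $P$, and a short argument shows that the maximal elements of $Q_0$ are precisely the principal ideals $\da x$ with $x\in\Max(P)$, giving a bijection $x\mapsto\da x$ from $\Max(P)$ onto $\Max(Q_0)$. However, the basic Scott opens of $Q_0$ restrict on $\Max(Q_0)$ to sets of the form $\{x\in\Max(P):y\leq x\}$ for $y\in P$, whereas the relative Scott topology on $\Max(P)$ coming from $P$ is generated by $\{x\in\Max(P):y\ll x\}$. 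Since $y\leq x$ does not in general imply $y\ll x$, the naive ideal completion typically induces a strictly finer topology on $\Max$, so this first attempt must be refined.

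To repair this, I would restrict to a sub-dcpo $Q\subseteq Q_0$ whose additional (non-principal) ideals respect the way-below relation on $P$. One option is to take $Q$ to consist of all principal ideals together with only those non-principal ideals $I$ with $\bigvee I\in\Max(P)$, so that non-principal elements of $Q$ appear only at the top level and introduce no extraneous traces on $\Max(Q)$. Equivalently, one can form an algebraic completion of the abstract basis $(P,\ll)$ in which each non-compact $x\in P$ is replaced by a family of compact approximants indexed by $\dda x$, and check directly that the resulting dcpo is algebraic.

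The main obstacle is the final topological step: verifying that the traces on $\Max(Q)$ of the basic Scott opens coming from compact elements of $Q$ coincide exactly with the traces of the Scott opens of $P$ on $\Max(P)$. This calls for a careful correspondence between the way-below approximation in $P$ and the compact-approximation structure added in $Q$ at the top. One also has to verify that the chosen class of admissible ideals is itself closed under directed unions, so that $Q$ is genuinely a dcpo, and that every element of $Q$ is the directed supremum of the compact elements below it; these closure checks, together with the topology match, are where most of the technical work will lie.
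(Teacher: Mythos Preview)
The paper does not prove this lemma; it is quoted from \cite{Bennett-Lutzer-2009} and used as a black box, so there is no argument in the paper to compare your plan against.

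Judged on its own, your diagnosis of the naive ideal completion $\mathrm{Idl}(P)$ is correct: its basic Scott opens trace on $\Max(\mathrm{Idl}(P))\cong\Max(P)$ as $\ua y\cap\Max(P)$ rather than $\dua y\cap\Max(P)$, in general a strictly finer topology. But the concrete repair you propose fails, and not merely for lack of detail---the ``closure check'' you postpone is simply false. Let $Q\subseteq\mathrm{Idl}(P)$ consist of the principal ideals together with the non-principal ideals whose supremum is maximal, and take $P$ to be the chain $\mathbb{N}\cup\{\infty,\top\}$ with $0<1<\cdots<\infty<\top$, a continuous dcpo with $\Max(P)=\{\top\}$. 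The ideal $I=\{0,1,2,\ldots\}$ is non-principal with $\bigvee I=\infty\notin\Max(P)$, hence $I\notin Q$; yet $I=\bigcup_{n}\da n$ is a directed union of principal ideals, all of which lie in $Q$. So $Q$ is not a dcpo, and the construction collapses. The structural issue is that you are completing with respect to $\leq$ while the topology on $\Max(P)$ is governed by $\ll$; any construction that succeeds must build $\ll$ into the order of the new poset from the outset. Your alternative phrase ``algebraic completion of the abstract basis $(P,\ll)$'' gestures in that direction but is not yet a construction: the rounded ideal completion of $(P,\ll)$ just returns $P$ itself, and you have not specified what other completion you intend, nor why its maximal elements and their relative Scott topology would coincide with those of $P$.
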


\begin{theorem}
If the product $X\times Y$ of two spaces $X$ and $Y$ is domain-representable, then $X$ and ${Y}$ are domain-representable. 
\end{theorem}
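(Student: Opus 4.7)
The plan is to exhibit, from any domain model of $X \times Y$, an explicit domain model of $X$ (and by symmetry, of $Y$).

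By Lemma \ref{lem1}, I may replace the hypothesized model by an algebraic one: fix an algebraic domain $P$ and a homeomorphism $h\colon X\times Y \to \Max(P)$. Since $Y\neq\emptyset$, pick any $y_0\in Y$ and write $p_x := h(x,y_0)$ for each $x\in X$, so that $S := \{p_x : x\in X\}$ is the copy of $X$ sitting inside $\Max(P)$. I would then form a candidate algebraic poset
\[
Q := \{a \in K(P) : a \leq p_x \text{ for some } x\in X\} \ \sqcup \ X,
\]
keeping the $P$-order on the compact part, declaring $a \leq x$ (for $a$ compact, $x\in X$) iff $a \leq p_x$ in $P$, and making distinct elements of $X$ incomparable. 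The points of $X$ are intended to be the maximal elements of $Q$.

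The key verifications are, in order: (i) $Q$ is an algebraic dcpo with $K(Q)$ equal to the compact part; (ii) $\Max(Q)=X$ and each $x\in X$ is the directed supremum in $Q$ of $\{a\in K(P): a\leq p_x\}$; (iii) the relative Scott topology on $\Max(Q)$ matches the topology of $X$ under the identity bijection. Step (iii) is mostly unwinding definitions, since a basic Scott-open $\ua a$ of $Q$ (with $a$ compact) traces on $\Max(Q)$ to $\{x\in X : a\leq p_x\}$, and under the homeomorphism $h$ restricted to $X\times\{y_0\}$ this is the open subset $(\pi_X\circ h^{-1})(\ua a \cap \Max(P) \cap S)$ of $X$, open because $\pi_X$ is an open map from $X\times Y$ onto $X$. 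Step (ii) follows quickly once (i) is established.

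The main obstacle is step (i). For a directed set $D\subseteq Q$ eventually in the compact part, $s := \sup_P D$ exists in $P$, and I must identify its $Q$-supremum coherently. The benign cases are: $s\in K(P)$ with $s\leq p_x$ for some $x$ (so $s$ itself lies in $Q$), or $s=p_x$ for a unique $x$ (so $x$ serves as the $Q$-supremum). The pathological cases—$s$ not bounded above by any $p_x$, or $s$ bounded above by distinct $p_x$ and $p_{x'}$—would leave $D$ without a $Q$-supremum. To preclude them, one restricts the compact part of $Q$ further, for instance to those compact $a$ whose cone $\ua a\cap \Max(P)$ already meets the slice $S$; the $T_1$ separation of $\Max(P)$ (automatic because $P$ is algebraic) together with the algebraic approximation of $s$ by its compact predecessors then forces $s$ into $\da S$ and pins down the unique $x$ with $s\leq p_x$. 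Interchanging the roles of $X$ and $Y$ and fixing instead some $x_0\in X$ yields a domain model of $Y$ by the same construction.
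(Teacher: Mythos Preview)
Your proposed fix is circular: the condition ``$\ua a\cap\Max(P)$ meets the slice $S$'' is precisely $a\le p_x$ for some $x$, i.e.\ $a\in\da S$, which is the restriction you already imposed. So nothing new is gained, and the first pathology is genuine. For a concrete instance, let $X=\mn\cup\{\infty\}$ (one-point compactification) and $Y=\{0,1\}$ discrete. Build an algebraic model $P$ of $X\times Y$ containing, besides chains $(b^i_n)_n$ with $\sup_n b^i_n=(\infty,i)$ and $b^i_n\le(m,i)$ for $m\ge n$, an extra chain $c_0<c_1<\cdots$ satisfying $c_n\le b^1_n$ and $c_n\le(n,0)$ for each finite $n$, but $c_n\not\le(\infty,0)$. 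One checks that $\Max(P)$ still carries the product topology. With $y_0=0$ each $c_n$ lies in $\da S$, yet $\sup_P\{c_n\}=(\infty,1)\notin\da S$, and your $Q$ contains no upper bound for $\{c_n\}$: no $p_x=(x,0)$ dominates all $c_n$, and no compact element of $\da S$ lies above $(\infty,1)$. Hence $Q$ is not a dcpo. The $T_1$ property of $\Max(P)$ plays no role here. The second pathology is likewise unresolved: for a non-compact, non-maximal $s$ nothing forbids $s\le p_x$ and $s\le p_{x'}$ with $x\ne x'$, and then $x,x'$ are incomparable upper bounds of $D$ in $Q$ with no least element below them.

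The paper's construction is genuinely different and does not attempt to carve a sub-poset out of $K(P)$. Instead it forms an auxiliary poset of triples $(U,V,k)$ with $U$ open in $X$, $V$ an open neighbourhood of $y_0$, $k\in K(P)$, and $U\times V\subseteq\ua k\cap\Max(P)$; the order demands both $k_1\le k_2$ and $\ua k_2\cap\Max(P)\subseteq U_1\times V_1$. The ideal completion of this poset is then shown to be an algebraic domain whose maximal ideals are parametrised by the points of $X$. The open-set data $U,V$ supply exactly the control over directed families that your bare compact-element construction lacks.
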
		
\begin{proof}
According to Lemma \ref{lem1}, assume that $(P, \leq)$ is an algebraic domain model of $X\times Y$.
Without loss of generality, we may assume that the  maximal point space $\Max(P)$ equals $X\times Y$, that is,
\begin{center}
$\Max(P)=X\times Y$ and
$\mathcal O(X\times Y)=\{U\cap \Max(P):U\in\sigma(P)\}$.	
\end{center}
Let $y_0$ be a fixed element of $Y$.
To construct a domain model of $X$, it suffices to construct a domain model of the subspace $X\times\{y_0\}$ of $X \times Y$. We do this in three steps.

\medskip

	{\bf Step 1.} Construct a poset $Q$.
	
	Let
	$$ Q = \{(U,V,k)\in\mathcal O^*(X)\times\mathcal{U}(y_0)\times K(P): U\times V \subseteq \ua k \cap \Max(P)\},$$
	where $\mathcal U(y_0)$ is the set of all open neighborhoods of $y_0$.
	The elements of $Q$ are illustrated by Figure \ref{fig1}.
	\begin{figure}[ht] 
		\centering 
		\includegraphics[width=1\textwidth]{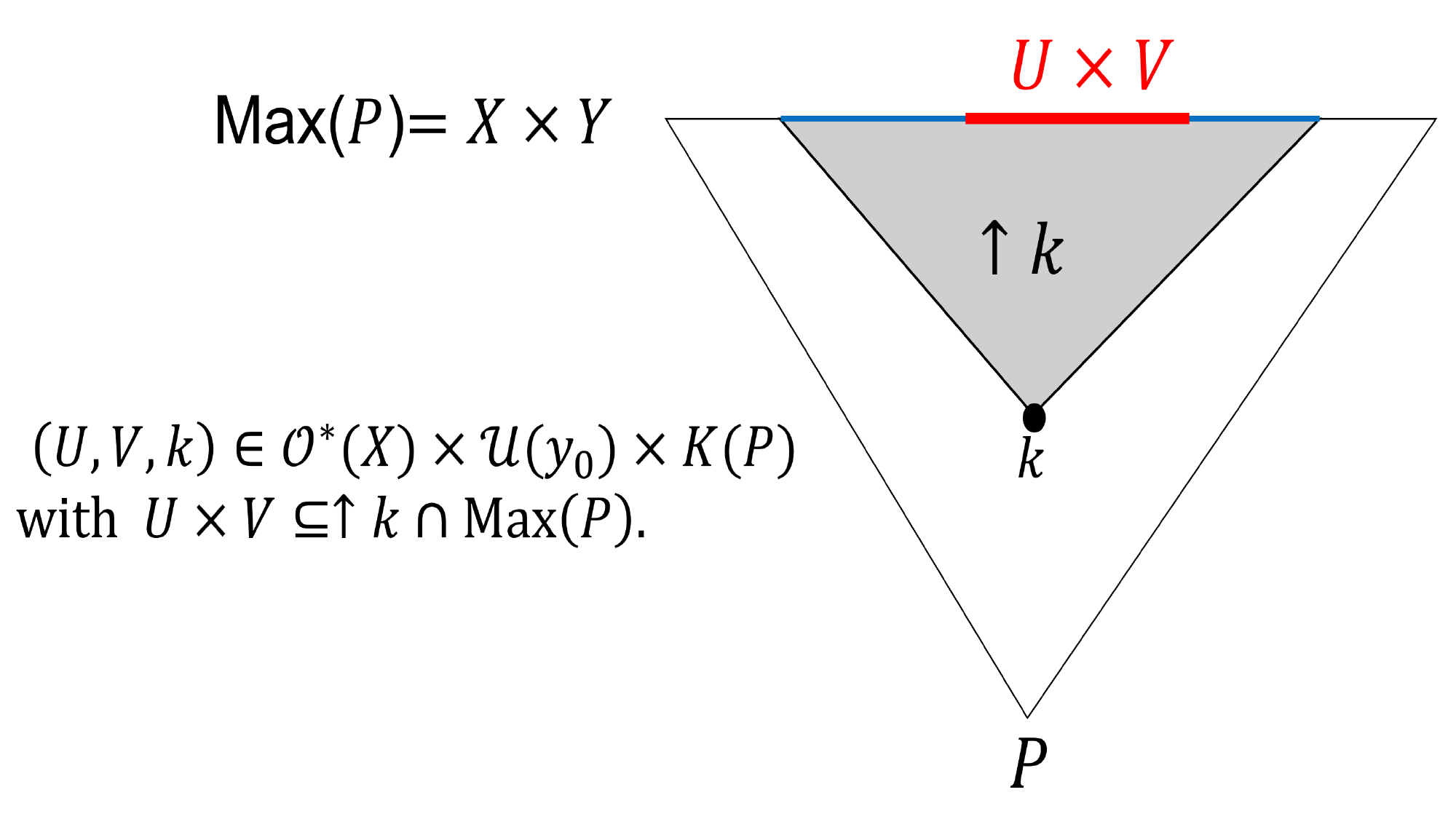} 
		\caption{The elements in $Q$} 
		\label{fig1} 
	\end{figure}
Define a binary relation $\sqsubseteq$ on $ Q $ by $(U_1,V_1,k_1)\sqsubseteq(U_2,V_2,k_2)$ iff it satisfies the following two conditions:
\begin{itemize}
	\item [(i)] $k_1 \leq k_2$;
	\item [(ii)] $\ua k_2 \cap \Max(P)\subseteq U_1\times V_1$,
\end{itemize}
as illustrated in  Figure \ref{fig2}.

		{\bf Claim 1.} $\sqsubseteq$ is a partial order on $Q$.
		
		\begin{figure}[ht] 
			\centering 
			\includegraphics[width=1\textwidth]{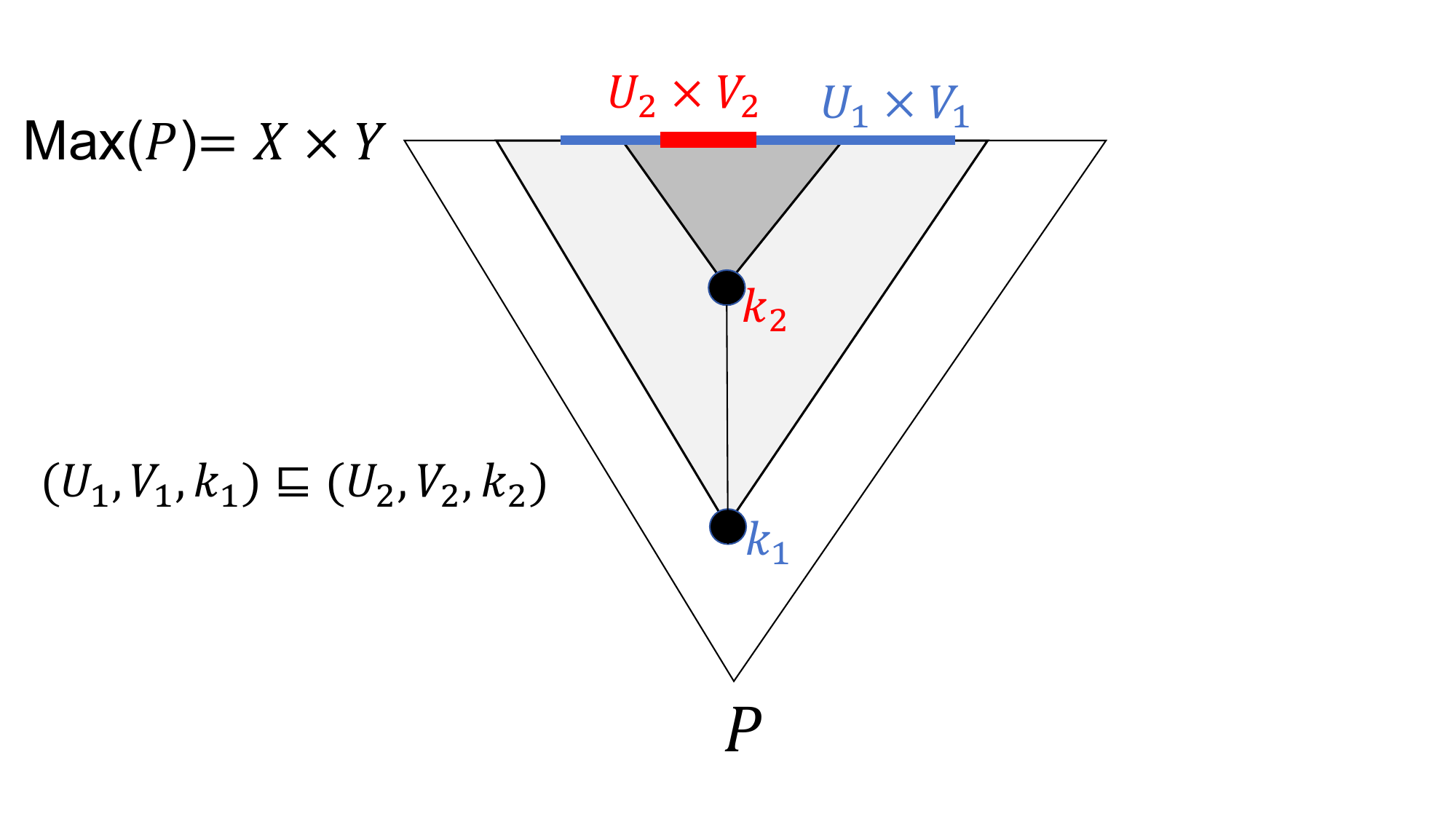} 
			\caption{The  poset $Q$} 
			\label{fig2} 
		\end{figure}
		
		Reflexivity is clear. 
		To show the antisymmetry, suppose  $(U_1,V_1,k_1)\sqsubseteq (U_2,V_2,k_2)$ and $(U_2,V_2,k_2)\sqsubseteq (U_1,V_1,k_1)$.
		Then, on the one hand, it follows that
		$k_1\leq k_2$ and $k_2\leq k_1$, so $k_1=k_2$. On the other hand,
		by the definition of $Q$ and the relation $\sqsubseteq$,  we have that
		$$\ua k_2\cap \Max(P)\subseteq U_1\times V_1\subseteq  \ua k_1\cap \Max(P)
		\subseteq U_2\times V_2\subseteq 	 \ua k_2 \cap \Max(P),$$
which follows that $U_1\times V_1=U_2\times V_2$, that is, $U_1=U_2$ and $V_1=V_2$. Therefore, 	 $(U_1,V_1,k_1)= (U_2,V_2,k_2)$.

Now we verify the transitivity. Suppose $(U_1,V_1,k_1)\sqsubseteq (U_2,V_2,k_2)\sqsubseteq (U_3,V_3,k_3)$. On the one hand, 
we have that $k_1\leq k_2\leq k_3$, so $k_1\leq k_3$. On the other hand, it follows that
$$\ua k_3 \cap \Max(P)\subseteq U_2\times V_2\subseteq \ua k_2 \cap \Max(P)\subseteq U_1\times V_1,$$ 
and consequently, $(U_1,V_1,k_1)\sqsubseteq (U_3,V_3,k_3)$. 
Therefore, $\sqsubseteq$ is a partial order.

\medskip

			{\bf Step 2.}
			Let $P_X=(Idl(Q),\subseteq)$, where $Idl(Q)$ denotes the set of all ideals of $Q$.
Then, $P_X$ is an algebraic domain whose compact elements are the principal ideals (see \cite[Proposition I-4.10]{redbook}).
			Next, we
			show that $\Max(P_X)=\{\mjx:x\in X\}$,
		 where
		$$\mathcal{J}(x)\triangleq \{(U,V,k)\in Q: x\in U\}.$$
		\medskip
		
{\bf Claim 2.}  For any $x\in X$, $\mj(x)\in P_X$.

We first show that 	 $\mathcal J(x)$ is a lower set.
 Suppose $(U,V,k)\sqsubseteq(U_0,V_0,k_0)\in \mjx$.
Then, we have that
			$$(x,y_0)\in U_0\times V_0\subseteq  \ua k_0 \cap \Max(P) \subseteq U\times V,$$
			which follows that $x\in  U$. This means that  $(U,V,k)\in \mjx$.
			
Now we show that $\mjx$ is directed.  That $\mjx\neq\emptyset$ is clear. 
			Suppose  $(U_1,V_1,k_1),(U_2,V_2,k_2)\in \mjx$.
			By the definition of $\mjx$ and $Q$, we have that
			 $$(x,y_0)\in (U_1\times V_1)\cap (U_2\times V_2)=(U_1\cap U_2)\times (V_1\cap V_2)\in \mathcal O(X\times Y)=\mathcal O(\Max(P)),$$
 so there exists $k_3\in K(P)$ such that
$$(x,y_0)\in \ua k_3\cap \Max(P)\subseteq (U_1\times V_1)\cap (U_2\times V_2)\subseteq\ua k_1\cap \ua k_2\cap \Max(P).$$
It follows that $k_1,k_2,k_3\leq (x,y_0)$, and since
$P$ is algebraic, there exists
$k_0\in K(P)$ such that
$k_1,k_2,k_3\leq k_0\leq (x,y_0)$. We have  that
$$(x,y_0)\in \ua k_0\cap \Max(P)\subseteq (U_1\times V_1)\cap (U_2\times V_2)\subseteq\ua k_1\cap \ua k_2\cap \Max(P).$$
On the other hand,
 since	$(x,y_0)\in \ua k_0\cap \Max(P)\in\mathcal O(X\times Y)$ and $\mathcal O(X)\times \mathcal O(Y)$ forms a basis of $\mathcal O(X\times Y)$,  there exist $U_0\in \mathcal (X)$ and $V_0\in\mathcal O(Y)$ (we may assume $U_0\subseteq U_1\cap U_2$ and $V_0\subseteq V_1\cap V_2$) such that
		$$(x,y_0)\in U_0\times V_0\subseteq \ua k_0\cap \Max(P).$$
It follows that $(U_1,V_1,k_1), (U_2,V_2,k_2)\sqsubseteq(U_0,V_0,k_0)\in\mjx$. Therefore, $\mj(x)$ is an ideal of $Q$, i.e., $\mjx\in P_X$.

\medskip
			
	{\bf Claim 3.} $\Max(P_X)\subseteq \{\mjx:x\in X\}$.
	
	Suppose $\mi\in \Max(P_X)$.  	Let
	$$W_1\triangleq \bigcap \{U \times V :(U,V,k)\in \mathcal{I} \}\text{ and }
	 W_2\triangleq \bigcap \{  \ua k \cap \Max(P) : (U,V,k)\in \mathcal{I} \}.$$
	We show that $W_1=W_2$. First, by the definition of $Q$, it is clear that  $W_1 \subseteq W_2$.
	Conversely, suppose $(U,V,k)\in\mathcal{I}$, i.e., $U\times V\in W_1$. Since $\mathcal I$ is a lower set,  there is $(U_0,V_0,k_0)\in \mathcal{I}$ such that $(U,V,k)\sqsubseteq (U_0,V_0,k_0)$, which follows that $U\times V\supseteq\ua k_0\cap \Max(P)\in W_2$. From this fact it follows that $W_2\subseteq W_1$. Therefore, $W_1=W_2$.
	
	Now let $D=\{k:(U,V,k)\in\mathcal{I}\}$. By using the directedness of $\mathcal{I}$, one can easily deduce that that $D$ is a directed subset of $P$. Let $k_0=\bigvee D$ as $P$ is a domain.  We have that
	$$\begin{array}{lll}
	W_1=W_2 &=& \bigcap \{\ua k \cap \Max(P) : (U,V,k)\in \mathcal{I} \}\\
	&=& \bigcap \{  \ua k : (U,V,k)\in \mathcal{I}\}\cap \Max(P)  \\
	&=&\ua k_0\cap \Max(P)\neq\emptyset.
	\end{array}
$$
	  Taking $(x,y)\in W_1$. Then,  for any $(U,V,k)\in\mathcal{I}$, we have that $x\in U$, which implies that $(U,V,k)\in \mjx$. We then have that  $\mi\subseteq \mjx$. By the maximality of $\mathcal{I}$, we obtain that $\mi=\mjx$.
	  Therefore, $\Max(P)\subseteq\{\mjx:x\in X\}$.
	
\medskip

{\bf Claim 4.} $\Max(P_X)= \{\mjx:x\in X\}$.
	
We need to show that every $\mjx$ is maximal. Note that $P_X$ is a dcpo, so $\mjx\in P_X\subseteq \da_{P_X} \Max(P_X)$. Then, there exists $\mi\in \Max(P_X)$ such that $\mjx\subseteq \mi$.
	From Claim 3, there exists $x_0\in X$ such that
	$\mi=\mj(x_0)\supseteq\mjx$. From the $T_1$-separation of $X$, it follows that
$$\{x_0\} =\bigcap\{U:(U,V,k)\in\mj(x_0)\}\subseteq \bigcap \{U:(U,V,k)\in \mjx\} =\{x\},$$
which implies that $x=x_0$. Thus, $\mjx=\mj(x_0)\in \Max(P_X)$.
 This means that $\{\mjx:x\in X\}\subseteq \Max(P_X)$. Therefore,
 $\Max(P_X)=\{\mjx:x\in X\}$.
			
\medskip

{\bf Step 4.} $P_X$ is an algebraic domain model of $X$.
			
	Define a mapping $f: X \mapsto P_X$ as follows: $\forall x\in X$,
	$$f(x)=\mjx.$$
	By Step 3, $f$ is well-defined, and one can easily check that it is bijective by using the $T_1$-separation of $X$.
			
	\medskip
	
	{\bf Claim 5.}	 $f$ is continuous.
	
	Note that $K(P_X)=\{\da_Q (U,V,k): (U,V,k)\in Q\}$, and thus $\{\ua_{P_X}(\da_Q(U,V,k))\cap \Max(P_X):(U,V,k)\in Q\}$ forms a basis for $\Max(P_X)$.
	We have that  for any $(U,V,k)\in Q$,
			\begin{equation}
				\begin{array}{lll}
					&&x\in f^{-1}(\ua_{P_X}(\da_Q(U,V,k))\cap \Max(Q)) \notag\\
					&\Leftrightarrow& f(x)=\mjx\in  \ua_{P_X}(\da_Q(U,V,k))\notag \\
					&\Rightarrow& \da_{Q}(U,V,k)\subseteq \mjx=\da\mjx\\
					&\Leftrightarrow& (U,V,k)\in \mjx\notag\\
					&\Leftrightarrow& x\in U,
					\end {array}
				\end{equation}
				which deduces that $f^{-1}( \ua_{P_X}(\da_Q(U,V,k))\cap \Max(Q)) = U\in\mathcal O(X)$. Therefore, $f$ is continuous.
	
	\medskip			
	
	{\bf Claim 6.}  $f$ is open.
	
Suppose $U\in \mathcal O(X)$ and $x_0\in X$ such that  $f(x_0)=\mj(x_0)\in f(U)$.
Since $(x_0,y_0)\in X\times Y=\Max(P)$ and $P$ is algebraic, there exists $k_0\in K(P)$ such that $(x_0,y_0)\in \ua k_0\cap \Max(P)\in \mathcal O(X\times Y)$. Then, there exists
$U_0\in\mathcal O(X)$ (we may require that $U_0\subseteq U$) and $V_0\in\mathcal O(Y)$ such that
$$(x_0,y_0)\in U_0\times V_0\subseteq \ua k_0\cap \Max(P).$$			
Clearly,  $(U_0,V_0, k_0)\in \mathcal{J}(x_0)$, and then $f(x_0)=\mj(x_0)\in \ua_{P_X}(\da_Q(U_0, V_0, k_0))\cap \Max(P_X)$.
Furthermore, for each $x\in X$,  if $f(x)=\mjx\in \ua_{P_X}(\da_{Q}(U_0,V_0,k_0))\cap \Max(P_X)$, then $(U_0,V_0,k_0)\in \mj(x)$. This implies that $x\in U_0\subseteq U$, and consequently $f(x)\in f(U_0)\subseteq f(U)$. Thus, we conclude that $f(U)$ is an open subset of $\Max (P_X)$. Therefore, $f$ is an open mapping.

\medskip

From Claims 5 and 6, it follows that $f$ is a homeomorphism.
Therefore,  $P_X$ is an algebraic domain model of $X$.			
\end{proof}

\section{Conclusions}
\begin{itemize}
		\item [(1)] Regarding Martin's problem, which asks whether there exists an ideal domain whose maximal elements do not form a $G_{\delta}$-set, we provide a positive answer. Specifically, Example \ref{exm2} demonstrates that $\Max(L)$ is not a $G_{\delta}$-set for the ideal domain $L$. However, $\Max(L)$ is (up to homeomorphism) a $G_{\delta}$-set in some other ideal domain $\widehat{L}$. This raises the following questions:
	\begin{itemize}
		\item [(i)] Is there an ideal domain $M$ such that $\Max(M)$ is not homeomorphic to a $G_{\delta}$-subset of any ideal domain?
		\item [(ii)] How can we  characterize the maximal point space of ideal domains?
	\end{itemize}
	In \cite{Brecht}, Brecht,  Goubault-Larrecq,  Jia, and Lyu asked whether every sober convergence Choquet-complete space is domain-complete (see Question (iii) in the Conclusion of \cite{Brecht}). Since we know that every ideal domain is  sober convergence Choquet-complete, a negative answer to question (i) would imply that the answer to the question in \cite{Brecht} is also negative. We leave these problems for future study.
	
	\item [(2)] In this paper, we provided a new approach to Bennett and Lutzer's problem: If $X\times Y$ is domain-representable, must $X$ and $Y$ be domain-representable?  There is another similar open problem posed in \cite{Bennett-Lutzer-2009}:
	\begin{itemize}
		\item If $X\times Y$ has a Scott domain model, must  $X$ and ${Y}$ have a Scott domain model?
	\end{itemize}						
	Here a Scott domain is a bounded complete (every up bounded subset has a supremum)  dcpo. Currently, we are still not able to solve this problem. However, the approach taken in Section 4, due to its direct construction feature, may help to find a method to answer this problem.
\end{itemize}

\medskip


\noindent{\bf Acknowledgments: }
\begin{itemize}
	\item [\rm (1)] We would like to express our gratitude to the reviewer who pointed out the reference [5], which addressed the second problem in our paper. Additionally, the reviewer provided insightful historical context on the concept of domain-representability. These contributions have significantly enriched our research and offered new perspectives.
	\item [\rm (2)] This work was supported by the National Natural Science Foundation of China (No. 12071188, 12101313).
\end{itemize}

\end{document}